\numberwithin{equation}{section}
\newtheorem{theorem}{Theorem}[section]
\newtheorem{question}[theorem]{Question}
\newtheorem{lemma}[theorem]{Lemma}
\newtheorem{remark}{Remark}
\begin{document}
\baselineskip=16pt

\title{Resistance distance-based graph invariants and  spanning trees of  graphs derived from the strong product of $P_2$ and $C_n$\footnote{This work was supported by the National Natural Foundation of China [61773020]. Corresponding author: Yingui Pan(panygui@163.com)
 }
}

\author{Yingui Pan$^{a}$,   Jianping Li$^{a}$   \\
	\small  $^{a}$College of Liberal Arts and Sciences, National University of Defense Technology, \\
	\small  Changsha, China, 410073.\\
}

\date{\today}

\maketitle

\begin{abstract}
Let $G_n$ be a graph obtained by the strong product of $P_2$ and $C_n$, where $n\geqslant3$. In this paper, explicit expressions for the Kirchhoff index,  multiplicative degree-Kirchhoff index and number of spanning trees of $G_n$ are determined, respectively. It is surprising to find that the Kirchhoff (resp. multiplicative degree-Kirchhoff) index of $G_n$ is almost one-sixth of its Wiener (resp. Gutman) index. Moreover, let $\mathcal{G}^r_n$ be the set of subgraphs obtained from $G_n$ by deleting any $r$ vertical edges of $G_n$, where $0\leqslant r\leqslant n$. Explicit formulas for the Kirchhoff index and the number of spanning trees for any graph $G^r_n\in \mathcal{G}^r_{n}$ are completely established, respectively. Finally, it is interesting to see that the Kirchhoff index of $G^r_n$ is almost one-sixth of its Wiener index. 
\end{abstract}



\section{Introduction}
Let $G$ be a simple connected graph with vertex set $V(G)$ and edge set $E(G)$. Then $|V(G)|$ and $|E(G)|$ are called the \textit{order} and the \textit{size} of $G$, respectively. For a simple graph $G$ of order $n$, its \textit{adjacent matrix} $A(G)=(a_{ij})_{n\times n}$ is a $(0,1)$-matrix with $a_{ij}=1$ if and only if two vertices $i$ and  $j$ are adjacent in $G$, and the matrix $D(G):=\textrm{diag}(d_{1},d_{2},\ldots,d_{n})$ is called the diagonal matrix of vertex degrees, where $d_{i}$ is the degree of vertex $i$  in $G$ for $1\leqslant i\leqslant n$. Then the \textit{ Laplacian matrix} of $G$ is defined as ${L}(G)=D(G)-A(G)$, whereas the \textit{normalized Laplacian matrix} of $G$ is defined to be  $\mathcal{L}(G)=D(G)^{-\frac{1}{2}}L(G)D(G)^{-\frac{1}{2}}$. It should be stressed  that ${(d_{i})}^{-\frac{1}{2}}=0$ for $d_i=0$  \cite{006}. Hence, we have
\begin{align*}
&\hspace{1.5cm}(\mathcal{L}(G))_{ij}=\left\{ 
\begin{array}{ll}
{1,} &\textrm{if $i=j$};\\
{-\frac{1}{\sqrt{d_{i}d_{j}}}}, &\textrm{if $i\ne j$ and $v_i\sim v_j$};\\
{0,} & \textrm{otherwise}.\\
\end{array} 
\right. &
\end{align*}


Let $d_{ij}$ denote the \textit{distance} between two vertices $i$ and $j$ in $G$. The \textit{Wiener index} of $G$, introduced in \cite{020}, is defined as $W(G)=\sum_{i<j}d_{ij}$. Later, Gutman \cite{000e} gave a weighted version of the Wiener index which is defined as  $\textrm{Gut}(G)=\sum_{i<j}d_id_jd_{ij}$ and now known as \textit{Gutman index}. If $G$ is an $n$-vertex tree, Gutman \cite{000e} proved that $\textrm{Gut}(G)=4W(G)-(2n-1)(n-1)$.


In 1993, Klein and Randi\'c \cite{013} proposed the concept of \textit{resistance distance} by considering the electronic network.  The resistance distance $r_{ij}$ is the effective resistance between two vertices $i$ and $j$ after every edge of a graph $G$ was putted one unit resistor. This parameter is intrinsic to both  graph theory and mathematical chemistry. Similar to the definition of the Wiener index, Klein and Randi\'c \cite{013} defined  $K\!f(G):=\sum_{i<j}r_{ij}$ to be the Kirchhoff index of $G$. They also found that $K\!f(G)\leqslant{W}(G)$ with equality if and only if $G$ is a tree. For a  connected graph $G$ of order $n$, Klein \cite{011} and Lov$\acute{a}$sz \cite{015}, independently, obtained that 
$${K\!f}(G)=n\sum^{n}_{i=2}\frac{1}{\rho_i},$$	
where $0=\rho_1<\rho_2\leqslant\cdots\leqslant\rho_n$ $(n\geqslant2)$ are the eigenvalues of $L(G)$.


In 2007, Chen and Zhang \cite{005}  proposed a weighted version of the Kirchhoff index, which is defined as $K\!f^{*}(G)=\sum_{i<j}d_{i}d_{j}r_{ij}$. This index is now known as multiplicative degree-Kirchhoff index.  Another weighted form of Kirchhoff index is the addictive degree-Kirchhoff index \cite{007}. Multiplicative degree-Kirchhoff index is closely related to the spectrum of the normalized Laplacian matrix $\mathcal{L}(G)$. For a connected graph $G$ of order $n$ and size $m$, Chen and Zhang \cite{005} showed that
$${K\!f}^{*}(G)=2m\sum^{n}_{i=2}\frac{1}{\lambda_i},$$
where $0=\lambda_1<\lambda_2\leqslant\cdots\leqslant\lambda_n$ $(n\geqslant2)$ are the eigenvalues of $\mathcal{L}(G)$.

Some techniques to determine the Kirchhoff index and multiplicative degree-Kirchhoff index were given in \cite{001,002,cle1,cle2,017}. Some other topics on the Kirchhoff index  and the multiplicative degree-Kirchhoff index of a graph  may be referred to \cite{018,019,023,025,026} and references therein.  
In the last decades, many researchers are devoted to give closed formulas for the  Kirchhoff index and the multiplicative degree-Kirchhoff index of graphs with special structures, such as cycles \cite{012},   ladder graphs \cite{cin}, ladder-like chains \cite{004,014}, liner phenylenes \cite{li0,peng,zhu}, linear polyomino chains \cite{010,022}, linear pentagonal chains \cite{he,ywang}, linear hexagonal chains \cite{009,li1,022},   linear octagonal-quadrilateral networks \cite{liu1}, linear crossed chains \cite{pan,pan1}, and composite graphs \cite{025},. 

Given two graphs $G$ and $H$, \textit{the strong product} of $G$ and $H$, denoted by $G\boxtimes H$, is the graph with vertex set $V(G)\times V(H)$, where two distinct vertices $(u_1,v_1)$ and $(u_2,v_2)$ are adjacent whenever $u_1$ and $u_2$ are equal or adjacent in $G$, and $v_1$ and $v_2$ are equal or adjacent in $H$.   Pan and Li \cite{pan1} determined the Kirchhoff index, multiplicative degree-Kirchhoff index and number of spanning trees of  graph $P_2\boxtimes P_n$.

 Motivated by \cite{pan1}, we consider the graph $P_2\boxtimes C_n$, where $n\geqslant3$. Let $G_n=P_2\boxtimes C_n$, where the graph $G_n$ is depicted in Figure 1. Obviously,  $|V{(G_n)}|=2n$ and $|E{(G_n)}|=5n$. Let $E^{\prime}$ be the set of vertical edges of $G_n$, where $E^{\prime}=\{ii^{\prime}:i=1,2,\dots,n\}$.     Let $\mathcal{G}^r_n$ be the set of subgraphs obtained from $G_n$ by deleting $r$ vertical edges of $G_n$, where $0\leqslant r\leqslant n$. It is easy to obtain that $\mathcal{G}^0_n=\{G_n\}$.

\begin{figure}[htbp]
	\centering 
	\includegraphics[height=2.      in, width=3   in,angle=0]{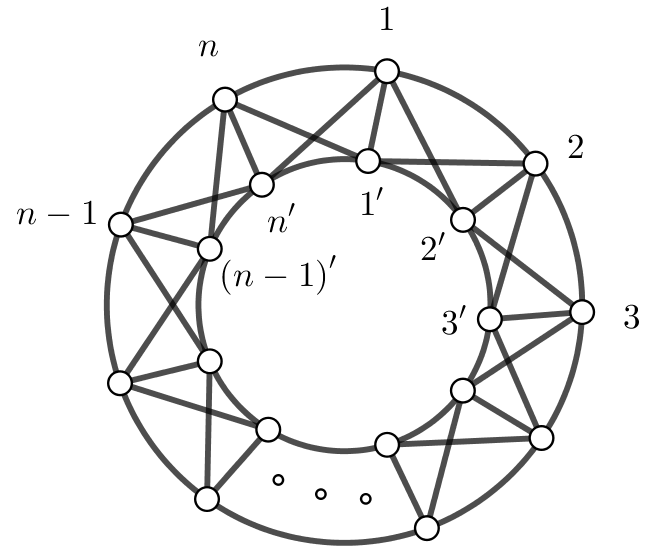}
	\caption{Graph $G_n$ with some labelled vertices. }
	\label{fig01}
\end{figure}

In this paper, explicit expressions for the Kirchhoff index, the multiplicative degree-Kirchhoff index and the number of spanning trees of $G_n$ are determined, respectively. It is nice to find that the Kirchhoff (resp. multiplicative degree-Kirchhoff) index of $G_n$ is almost one-sixth of its Wiener (resp. Gutman) index. Additionally, for any graph $G^r_n\in \mathcal{G}^r_{n}$,  its Kirchhoff index and  number of spanning trees are completely determined, respectively. Moreover, the Kirchhoff index of $G^r_n$ is shown to be almost one-sixth of its Wiener index.

\section{Preliminaries}
 For convenience, let $V_1=\{1,2,\ldots,n\}$ and $V_2=\{1^\prime,2^\prime,\ldots,n^\prime\}$. Then $L(G_n)$ and $\mathcal{L}(G_n)$ can be written as below:
\begin{align*}
	&\hspace{1cm}{L}(G_n)=\left(
	\begin{array}{cc}
		{L}_{{11}} & {L}_{{12}} \\
		{L}_{{21}} & {L}_{{22}} 
	\end{array}
	\right),
	&
	&\mathcal{L}(G_n)=\left(
	\begin{array}{cc}
		\mathcal{L}_{{11}} & \mathcal{L}_{{12}} \\
		\mathcal{L}_{{21}} & \mathcal{L}_{{22}} 
	\end{array}
	\right),
	&
\end{align*}
where ${L}_{{ij}}$ (resp. $\mathcal{L}_{{ij}}$) denotes the submatrix whose rows correspond  to vertices in $V_i$ and columns correspond to vertices in $V_j$. By the construction of $G_n$, one can easily verify that $L_{11}=L_{22}$, ${L}_{12}=L_{21}$, $\mathcal{L}_{11}=\mathcal{L}_{22}$ and $\mathcal{L}_{12}=\mathcal{L}_{21}$. Let
$$T=\left(
	\begin{array}{cc}
		\frac{1}{\sqrt2}I_{n} & \frac{1}{\sqrt2}I_{n} \\
		\frac{1}{\sqrt2}I_{n} & -\frac{1}{\sqrt2}I_{n} 
	\end{array}
	\right).$$
Then
\begin{align*}
	&\hspace{1cm}T{L}(G_n)T=\left(
	\begin{array}{cc}
		{L}_{A} & 0 \\
		0 & {L}_{S} 
	\end{array}
	\right),
	&
	&T\mathcal{L}(G_n)T=\left(
	\begin{array}{cc}
		\mathcal{L}_{A} & 0 \\
		0 & \mathcal{L}_{S} 
	\end{array}
	\right),
	&
\end{align*}
where ${L}_A={L}_{{11}}+{L}_{{12}}$, ${L}_S={L}_{{11}}-{L}_{{12}}$,  $\mathcal{L}_A=\mathcal{L}_{{11}}+\mathcal{L}_{{12}}$ and $\mathcal{L}_S=\mathcal{L}_{{11}}-\mathcal{L}_{{12}}$.

Let $\Phi(B):=\det(xI-B)$ be the characteristic polynomial of a square matrix $B$, where $I$ is an unit matrix with the same order as that of $B$. Similar to the decomposition theorem obtained in \cite{008,pan}, we can obtain the decomposition theorem of $G_n$ as below. Here, we omit the proof.

\begin{lemma}
	\label{lem21}
	Let ${L}_A$, ${L}_S$,  $\mathcal{L}_A$ and $\mathcal{L}_S$ be defined as above. Then
	\begin{flalign*}
	&\hspace{1cm}\Phi(L(G_n))={\Phi({L}_A)}\cdot{\Phi({L}_S)},&  &\Phi(\mathcal{L}(G_n))={\Phi(\mathcal{L}_A)}\cdot{\Phi(\mathcal{L}_S)}.&
	\end{flalign*}
\end{lemma}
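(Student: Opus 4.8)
The plan is to prove the decomposition theorem by exhibiting an explicit orthogonal similarity that block-diagonalizes $L(G_n)$ (and, verbatim with the normalized versions, $\mathcal{L}(G_n)$). First I would record that the matrix $T$ defined just above the lemma is symmetric and orthogonal: a direct check gives $T^2 = I_{2n}$, so $T^{-1} = T = T^{\top}$. Hence $TL(G_n)T$ is similar to $L(G_n)$ and shares its characteristic polynomial, and the same holds for $T\mathcal{L}(G_n)T$.

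Next I would carry out the block multiplication. Writing $L(G_n)$ in the $2\times 2$ block form with the relations $L_{11}=L_{22}$ and $L_{12}=L_{21}$ (which follow from the symmetry of $G_n$ under swapping the two copies of $C_n$, already noted in the Preliminaries), a routine computation gives
\begin{align*}
TL(G_n)T=\begin{pmatrix} L_{11}+L_{12} & 0 \\ 0 & L_{11}-L_{12}\end{pmatrix}=\begin{pmatrix} L_A & 0 \\ 0 & L_S\end{pmatrix},
\end{align*}
the off-diagonal blocks vanishing precisely because $L_{11}+L_{12}-(L_{11}+L_{12})=0$ after the $\tfrac12$ factors are combined; the point is that the $(1,2)$ block of $T L T$ equals $\tfrac12\big((L_{11}+L_{12})-(L_{21}+L_{22})\big)=0$ by the two equalities above. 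The identical computation, using $\mathcal{L}_{11}=\mathcal{L}_{22}$ and $\mathcal{L}_{12}=\mathcal{L}_{21}$, yields $T\mathcal{L}(G_n)T=\operatorname{diag}(\mathcal{L}_A,\mathcal{L}_S)$.

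Finally I would conclude: since the characteristic polynomial of a block-diagonal matrix is the product of the characteristic polynomials of the blocks, and since similar matrices have equal characteristic polynomials,
\begin{align*}
\Phi(L(G_n))=\Phi(TL(G_n)T)=\Phi(L_A)\cdot\Phi(L_S),
\end{align*}
and likewise $\Phi(\mathcal{L}(G_n))=\Phi(\mathcal{L}_A)\cdot\Phi(\mathcal{L}_S)$. There is no real obstacle here — this is a known decomposition technique (cf. the cited references), and the authors themselves say they omit the proof; the only thing that genuinely needs care is verifying the four block identities $L_{11}=L_{22}$, $L_{12}=L_{21}$, $\mathcal{L}_{11}=\mathcal{L}_{22}$, $\mathcal{L}_{12}=\mathcal{L}_{21}$ from the structure of $G_n$, and checking that $T$ is orthogonal, both of which are immediate.
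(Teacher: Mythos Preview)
Your proof is correct and follows exactly the approach the paper intends: the authors already display the block-diagonalization $TL(G_n)T=\operatorname{diag}(L_A,L_S)$ (and its normalized analogue) in the Preliminaries and then state the lemma with the remark ``Here, we omit the proof,'' so you have simply filled in the routine verification that $T$ is orthogonal and that similarity plus block-diagonal structure yields the factorization of the characteristic polynomial.
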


\begin{lemma}\cite{013}\label{kf}
	Let $G$ be an $n$-vertex connected graph. Then $K\!f(G)=n\sum^n_{i=2}\frac{1}{\rho_i}$. 
\end{lemma}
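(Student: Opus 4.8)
The plan is to route everything through the Moore--Penrose pseudoinverse $L^{+}$ of $L=L(G)$. First I would record the classical electrical-network identity $r_{ij}=(L^{+})_{ii}+(L^{+})_{jj}-2(L^{+})_{ij}$. To see it, inject a unit current at vertex $i$ and extract it at vertex $j$; the resulting potential vector $\mathbf{x}$ satisfies $L\mathbf{x}=\mathbf{e}_i-\mathbf{e}_j$, and since $\mathbf{e}_i-\mathbf{e}_j$ is orthogonal to $\ker L=\langle\mathbf{1}\rangle$ (connectedness makes $\rho_2>0$, so the kernel is exactly the span of $\mathbf{1}$ and $L^{+}$ is well defined), one may take $\mathbf{x}=L^{+}(\mathbf{e}_i-\mathbf{e}_j)$; then $r_{ij}=\mathbf{x}_i-\mathbf{x}_j=(\mathbf{e}_i-\mathbf{e}_j)^{\top}L^{+}(\mathbf{e}_i-\mathbf{e}_j)$, which expands to the stated formula.

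Next I would sum over all unordered pairs. Writing $s=\mathrm{tr}(L^{+})$, the identity above gives
$$K\!f(G)=\sum_{i<j}r_{ij}=\sum_{i<j}\bigl[(L^{+})_{ii}+(L^{+})_{jj}\bigr]-2\sum_{i<j}(L^{+})_{ij}=(n-1)s-\sum_{i\neq j}(L^{+})_{ij}.$$
The key simplification is that $\mathbf{1}^{\top}L^{+}\mathbf{1}=0$: since $L$ is symmetric, $\ker L^{+}=\ker L=\langle\mathbf{1}\rangle$, so $\sum_{i,j}(L^{+})_{ij}=\mathbf{1}^{\top}L^{+}\mathbf{1}=0$ and hence $\sum_{i\neq j}(L^{+})_{ij}=-s$. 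Substituting yields $K\!f(G)=(n-1)s+s=ns$.

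To finish, I would note that $L$ is positive semidefinite with spectrum $0=\rho_1<\rho_2\leqslant\cdots\leqslant\rho_n$, so $L^{+}$ has eigenvalues $0,\rho_2^{-1},\dots,\rho_n^{-1}$ and $s=\mathrm{tr}(L^{+})=\sum_{i=2}^{n}\rho_i^{-1}$, giving $K\!f(G)=n\sum_{i=2}^{n}\frac1{\rho_i}$. The only genuine obstacle is the pseudoinverse expression for $r_{ij}$; everything after it is bookkeeping with traces and the kernel $\langle\mathbf{1}\rangle$. A variant that sidesteps $L^{+}$ altogether is to fix an orthonormal eigenbasis $\phi_1=\mathbf{1}/\sqrt n,\phi_2,\dots,\phi_n$ of $L$, use $r_{ij}=\sum_{k\geqslant2}\rho_k^{-1}\bigl(\phi_k(i)-\phi_k(j)\bigr)^2$, and observe that $\sum_{i<j}\bigl(\phi_k(i)-\phi_k(j)\bigr)^2=n\|\phi_k\|^2-(\mathbf{1}^{\top}\phi_k)^2=n$ for every $k\geqslant2$; this reaches the same identity and is the version I would most likely write up.
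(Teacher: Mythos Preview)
Your argument is correct: both the pseudoinverse route and the eigenvector variant are standard and valid, and every step you outline goes through. Note, however, that the paper does not supply its own proof of this lemma; it is quoted from \cite{013} (with the spectral formulation attributed in the introduction to \cite{011,015}) and used as a black box, so there is nothing in the paper to compare your approach against.
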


\begin{lemma}\cite{005}\label{kff}
	Let $G$ be an $n$-vertex connected graph with $m$ edges. Then $K\!f^{*}(G)=2m\sum^n_{i=2}\frac{1}{\lambda_i}$. 
\end{lemma}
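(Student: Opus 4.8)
The plan is to convert the combinatorial sum defining $K\!f^{*}(G)$ into a sum over the spectrum of $\mathcal{L}(G)$ by expressing each resistance distance as a quadratic form built from the eigenpairs of $\mathcal{L}:=\mathcal{L}(G)$. Since $G$ is connected, every $d_i\geqslant1$ and $L=L(G)=D^{1/2}\mathcal{L}D^{1/2}$; let $\mathcal{L}=\sum_{k=2}^{n}\lambda_k\phi_k\phi_k^{\top}$ be a spectral decomposition with $\{\phi_1,\dots,\phi_n\}$ orthonormal and $\phi_1=\frac{1}{\sqrt{2m}}D^{1/2}\mathbf 1$ spanning $\ker\mathcal{L}$. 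Then $\mathcal{L}^{+}=\sum_{k=2}^{n}\frac{1}{\lambda_k}\phi_k\phi_k^{\top}$, and because $\mathcal{L}\mathcal{L}^{+}\mathcal{L}=\mathcal{L}$ we get $L\bigl(D^{-1/2}\mathcal{L}^{+}D^{-1/2}\bigr)L=D^{1/2}\mathcal{L}\mathcal{L}^{+}\mathcal{L}D^{1/2}=L$, so $X:=D^{-1/2}\mathcal{L}^{+}D^{-1/2}$ is a $\{1\}$-inverse of $L$. Invoking the standard fact that $r_{ij}=(\mathbf e_i-\mathbf e_j)^{\top}X(\mathbf e_i-\mathbf e_j)$ for every $\{1\}$-inverse $X$ of $L$ (the right-hand side being independent of the choice of $X$ because $\mathbf e_i-\mathbf e_j\perp\mathbf 1$), and using $D^{-1/2}(\mathbf e_i-\mathbf e_j)=\frac{\mathbf e_i}{\sqrt{d_i}}-\frac{\mathbf e_j}{\sqrt{d_j}}$, I obtain
$$r_{ij}=\sum_{k=2}^{n}\frac{1}{\lambda_k}\Bigl(\frac{\phi_{ki}}{\sqrt{d_i}}-\frac{\phi_{kj}}{\sqrt{d_j}}\Bigr)^{2},$$
where $\phi_{ki}$ denotes the $i$-th entry of $\phi_k$.

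Next I would plug this into $K\!f^{*}(G)=\sum_{i<j}d_id_j\,r_{ij}$ and interchange the two summations, obtaining $K\!f^{*}(G)=\sum_{k=2}^{n}\frac{1}{\lambda_k}S_k$ with $S_k=\sum_{i<j}d_id_j\bigl(\frac{\phi_{ki}}{\sqrt{d_i}}-\frac{\phi_{kj}}{\sqrt{d_j}}\bigr)^{2}$. Expanding the square and summing over pairs $i<j$, the two ``square'' terms contribute $\sum_{i}\phi_{ki}^{2}(2m-d_i)=2m-\sum_i d_i\phi_{ki}^{2}$, using $\sum_i\phi_{ki}^{2}=\|\phi_k\|^{2}=1$ and $\sum_{j}d_j=2m$; the cross term contributes $\sum_i d_i\phi_{ki}^{2}-\bigl(\sum_i\sqrt{d_i}\,\phi_{ki}\bigr)^{2}=\sum_i d_i\phi_{ki}^{2}$, since $\sum_i\sqrt{d_i}\,\phi_{ki}=\sqrt{2m}\,\phi_1^{\top}\phi_k=0$ for $k\geqslant2$. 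The awkward terms $\sum_i d_i\phi_{ki}^{2}$ cancel, leaving $S_k=2m$ for every $k$, whence $K\!f^{*}(G)=2m\sum_{k=2}^{n}\frac{1}{\lambda_k}$.

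The only step genuinely needing care is the passage from resistance distances to the spectrum of $\mathcal{L}$, i.e.\ the displayed formula for $r_{ij}$; everything after that is short bookkeeping. I would either establish it as sketched (a generalized inverse of $L$ transported through $L=D^{1/2}\mathcal{L}D^{1/2}$), or equivalently derive $r_{ij}=L^{+}_{ii}+L^{+}_{jj}-2L^{+}_{ij}$ from Kirchhoff's and Ohm's laws and then change variables to $\mathcal{L}^{+}$. Since this lemma is precisely the theorem of Chen and Zhang \cite{005}, citing it directly is of course also an option.
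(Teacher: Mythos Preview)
Your argument is correct: the passage from $r_{ij}$ to the eigenpairs of $\mathcal{L}$ via the $\{1\}$-inverse $X=D^{-1/2}\mathcal{L}^{+}D^{-1/2}$ is valid, and the bookkeeping that collapses each $S_k$ to $2m$ is right (the cancellation of $\sum_i d_i\phi_{ki}^{2}$ and the vanishing of $\sum_i\sqrt{d_i}\,\phi_{ki}$ for $k\geqslant2$ are exactly the points that matter).

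There is, however, nothing to compare against in the paper itself: Lemma~\ref{kff} is stated with a citation to \cite{005} and given no proof here. The paper simply imports the Chen--Zhang formula as a black box and uses it downstream. Your write-up is thus a self-contained proof of a result the paper only quotes; if you want to match the paper's treatment, a one-line reference to \cite{005} suffices, but supplying the proof does no harm and is essentially the original spectral argument of Chen and Zhang.
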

 
 \begin{lemma}\label{the22}
 	\cite{006}
 	Let $G$ be an $n$-vertex connected graph. Then $\tau(G)=\frac{1}{n}\prod^n_{i=2}\rho_i$, where $\tau(G)$ is the number of spanning trees of $G$.	
 \end{lemma}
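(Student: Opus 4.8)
The plan is to deduce this identity from the classical Matrix-Tree theorem, combined with the standard description of the coefficients of a characteristic polynomial in terms of principal minors. Since $G$ is connected we have $0=\rho_1<\rho_2\leqslant\cdots\leqslant\rho_n$, so I would first write $\Phi(L(G))=\det(xI-L(G))=\prod_{i=1}^{n}(x-\rho_i)=x\prod_{i=2}^{n}(x-\rho_i)$ and read off the coefficient of $x$, which equals $(-1)^{n-1}\prod_{i=2}^{n}\rho_i$.

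Next I would compute the same coefficient directly from $\det(xI-L(G))$. Expanding this determinant, the coefficient of $x^{k}$ is $(-1)^{n-k}$ times the sum of all principal minors of $L(G)$ of order $n-k$; taking $k=1$, the coefficient of $x$ equals $(-1)^{n-1}\sum_{j=1}^{n}\det L_j$, where $L_j$ denotes the submatrix of $L(G)$ obtained by deleting row $j$ and column $j$. The key input is now the Matrix-Tree theorem: because every row and column of $L(G)$ sums to zero and $G$ is connected, each cofactor $\det L_j$ equals $\tau(G)$, so $\sum_{j=1}^{n}\det L_j=n\,\tau(G)$. Comparing the two expressions for the coefficient of $x$ gives $\prod_{i=2}^{n}\rho_i=n\,\tau(G)$, which is the claimed formula.

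I do not expect any real obstacle, since the statement is essentially a repackaging of the Matrix-Tree theorem; the only points needing care are the sign and degree bookkeeping when matching the two expansions (the factor $(-1)^{n-1}$ and the power $x^{1}$), and the use of connectedness both to ensure $\rho_2>0$, so that the product is over the genuinely nonzero Laplacian eigenvalues, and to guarantee that all principal cofactors of $L(G)$ coincide with $\tau(G)$.
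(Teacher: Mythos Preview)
Your argument is correct and is the standard derivation of this formula from the Matrix-Tree theorem via the coefficient of $x$ in $\det(xI-L(G))$. Note, however, that the paper does not supply its own proof of this lemma at all: it is quoted as a known result from Chung's book \cite{006}, so there is nothing to compare against beyond observing that your approach is the classical one.
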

 


\section{Resistance distance-based graph invariants and number of spanning trees of $G_n$} 

In this section, we will determine the Kirchhoff index, multiplicative degree-Kirchhoff index and number of spanning trees of $G_n$. The Laplacian matrix $L(C_n)$ of $C_n$ is very important in this section. It was proved in \cite{ande} that the eigenvalues of $L(C_n)$ are $4\sin^2(\frac{\pi i}{n})$, where $i=1,2,\ldots,n$. In the rest of this paper, let $\alpha_i:=4\sin^2(\frac{\pi i}{n})$. Then $\alpha_n=0$ and $\alpha_i>0$ for $i=1,2,\ldots,n-1$.


\subsection{Kirchhoff index and number of spanning trees of $G_n$ }
In this subsection, we will give the explicit formulas for the Kirchhoff index and number of spanning trees of $G_n$. Moreover, we will prove that the Kirchhoff index of $G_n$ is almost one-sixth of its Wiener index. First, one can see that
$${L}_{{11}}=\left(
	\begin{array}{ccccccc}
		5& -1& 0 & 0& \dots& 0& -1\\
		-1& 5& -1 & 0& \dots& 0& 0\\
		0& -1 & 5& -1 &  \dots&  0& 0\\
		0& 0& -1 & 5&  \dots& 0& 0 \\
		\vdots& \vdots& \vdots& \vdots& \ddots& \vdots& \vdots\\
		0& 0& 0& 0 & \dots&  5& -1\\
	-1& 0& 0& 0 &    \dots&  -1& 5
	\end{array}
	\right)_{n\times{n}}$$
and
$${L}_{{12}}=\left(
\begin{array}{ccccccc}
-1& -1& 0 & 0& \dots& 0& -1\\
-1& -1& -1 & 0& \dots& 0& 0\\
0& -1 & -1& -1 &  \dots&  0& 0\\
0& 0& -1 & -1&  \dots& 0& 0 \\
\vdots& \vdots& \vdots& \vdots& \ddots& \vdots& \vdots\\
0& 0& 0& 0 & \dots&  -1& -1\\
-1& 0& 0& 0 &    \dots&  -1& -1
\end{array}
\right)_{n\times{n}}.$$
Since $L_{A}=L_{11}+L_{12}$ and $L_{S}=L_{11}-L_{12}$, then
$${L}_{{A}}=\left(
\begin{array}{ccccccc}
4& -2& 0 & 0& \dots& 0& -2\\
-2& 4& -2 & 0& \dots& 0& 0\\
0& -2 & 4& -2 &  \dots&  0& 0\\
0& 0& -2 & 4&  \dots& 0& 0 \\
\vdots& \vdots& \vdots& \vdots& \ddots& \vdots& \vdots\\
0& 0& 0& 0 & \dots&  4& -2\\
-2& 0& 0& 0 &    \dots&  -2& 4
\end{array}
\right)_{n\times{n}}$$
and
$${L}_{{S}}=\left(
\begin{array}{ccccccc}
6& 0& 0 & 0& \dots& 0& 0\\
0& 6& 0 & 0& \dots& 0& 0\\
0& 0 & 6& 0 &  \dots&  0& 0\\
0& 0& 0 & 6&  \dots& 0& 0 \\
\vdots& \vdots& \vdots& \vdots& \ddots& \vdots& \vdots\\
0& 0& 0& 0 & \dots& 6& 0\\
0& 0& 0& 0 &    \dots&  0& 6
\end{array}
\right)_{n\times{n}}.$$
Note that ${L}_A=2L(C_n)$ and ${L}_S$ is a diagonal matrix. By Lemma \ref{lem21}, $2\alpha_1,2\alpha_2,\ldots,2\alpha_n,6,6,\ldots,6$ are all the eigenvalues of ${L}{(G_n)}$, i.e. $0,2\alpha_1,2\alpha_2,\ldots,2\alpha_{n-1},6,6,\ldots,6$ are all the eigenvalues of ${L}{(G_n)}$. Then we can get the following theorem.

\begin{theorem}\label{thmm1}
For $n\geq3$, let $G_n=P_2\boxtimes C_n$. Then

$(1)$ $K\!f(G_n)=\frac{n^3+4n^2-n}{12}.$

$(2)$ $\tau(G_n)=n\cdot2^{2n-2}\cdot3^n.$

$(3)$ $\lim_{n\to\infty}\frac{K\!f(G_n)}{W(G_n)}=\frac{1}{6}.$
\end{theorem}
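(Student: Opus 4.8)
The plan is to obtain all three parts from the eigenvalue list already established in the paragraph preceding the theorem, namely that the nonzero Laplacian eigenvalues of $G_n$ are $2\alpha_1,2\alpha_2,\ldots,2\alpha_{n-1}$ together with $6$ repeated $n$ times, where $\alpha_i=4\sin^2(\pi i/n)$. For part $(1)$, by Lemma \ref{kf} we have
\[
K\!f(G_n)=2n\left(\sum_{i=1}^{n-1}\frac{1}{2\alpha_i}+\frac{n}{6}\right)=n\sum_{i=1}^{n-1}\frac{1}{\alpha_i}+\frac{n^2}{3}.
\]
So the whole problem reduces to evaluating $\sum_{i=1}^{n-1}\frac{1}{\alpha_i}=\sum_{i=1}^{n-1}\frac{1}{4\sin^2(\pi i/n)}$. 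This is a classical trigonometric sum; I would quote (or derive via the partial-fraction expansion of $\cot$, or via the identity $\sum_{i=1}^{n-1}\csc^2(\pi i/n)=\frac{n^2-1}{3}$) that $\sum_{i=1}^{n-1}\frac{1}{4\sin^2(\pi i/n)}=\frac{n^2-1}{12}$. Substituting gives $K\!f(G_n)=n\cdot\frac{n^2-1}{12}+\frac{n^2}{3}=\frac{n^3-n+4n^2}{12}=\frac{n^3+4n^2-n}{12}$, as claimed.

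For part $(2)$, by Lemma \ref{the22} the number of spanning trees is $\tau(G_n)=\frac{1}{2n}\prod_{i=1}^{n-1}(2\alpha_i)\cdot 6^n$. Here $\prod_{i=1}^{n-1}\alpha_i=\prod_{i=1}^{n-1}4\sin^2(\pi i/n)$, and using the well-known product $\prod_{i=1}^{n-1}2\sin(\pi i/n)=n$ we get $\prod_{i=1}^{n-1}4\sin^2(\pi i/n)=n^2$; equivalently, $\prod_{i=1}^{n-1}\alpha_i=n^2$ is exactly the statement that $C_n$ has $n$ spanning trees via Lemma \ref{the22} applied to $C_n$. Hence $\prod_{i=1}^{n-1}(2\alpha_i)=2^{n-1}n^2$, and
\[
\tau(G_n)=\frac{1}{2n}\cdot 2^{n-1}n^2\cdot 6^n=\frac{1}{2n}\cdot 2^{n-1}n^2\cdot 2^n 3^n=n\cdot 2^{2n-2}\cdot 3^n.
\]

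For part $(3)$, I would first compute $W(G_n)$ in closed form. Since $K\!f(G_n)=\frac{n^3+4n^2-n}{12}$ is cubic in $n$, it suffices to show $W(G_n)$ is also $\sim \frac{n^3}{12}$. The Wiener index can be found directly from the structure of $P_2\boxtimes C_n$: the distance in $G_n$ between two columns $j$ and $k$ of the cycle is the cycle distance $d_{C_n}(j,k)=\min(|j-k|,n-|j-k|)$ regardless of which of the two rows the endpoints lie in (and $0$ within a column pair gives distance $1$ for the vertical edge). Summing, $W(G_n)=4\sum_{1\le j<k\le n}d_{C_n}(j,k)+n\cdot 1 = 4W(C_n)+n$, and $W(C_n)$ is the classical value $\frac{n^3}{8}$ for even $n$ and $\frac{n^3-n}{8}$ for odd $n$, in either case $\sim\frac{n^3}{8}$. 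Thus $W(G_n)\sim \frac{n^3}{2}$. Then
\[
\lim_{n\to\infty}\frac{K\!f(G_n)}{W(G_n)}=\lim_{n\to\infty}\frac{n^3/12}{n^3/2}=\frac{1}{6}.
\]
The main obstacle is nothing deep but rather the bookkeeping: getting the trigonometric sum $\sum\csc^2(\pi i/n)=\frac{n^2-1}{3}$ and the product $\prod 2\sin(\pi i/n)=n$ correctly applied, and assembling the exact (not just asymptotic) value of $W(G_n)$ cleanly enough that the parity cases for $C_n$ do not cause trouble in the limit — which they do not, since the difference is $O(n)$ against a $\Theta(n^3)$ main term.
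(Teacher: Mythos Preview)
Your proof is correct and follows essentially the same route as the paper: both use the established eigenvalue list together with Lemmas~\ref{kf} and~\ref{the22}, reducing everything to the known quantities $\sum_{i=1}^{n-1}1/\alpha_i=\frac{n^2-1}{12}$ and $\prod_{i=1}^{n-1}\alpha_i=n^2$, which the paper simply packages as the cited values $K\!f(C_n)=\frac{n^3-n}{12}$ and $\tau(C_n)=n$. The only cosmetic difference is in part~(3), where the paper computes $W(G_n)$ by a direct per-vertex sum (getting $\frac{n^3+n}{2}$ for odd $n$ and $\frac{n^3+2n}{2}$ for even $n$) while you use the tidier identity $W(G_n)=4W(C_n)+n$; note, however, that your sentence ``it suffices to show $W(G_n)$ is also $\sim\frac{n^3}{12}$'' is a slip---you need (and subsequently correctly obtain) $W(G_n)\sim\frac{n^3}{2}$.
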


\begin{proof}
$(1)$ Note that $|V(G_n)|=2n$ and $K\!f(C_n)=\frac{n^3-n}{12}$ (see \cite{024}). By Lemma \ref{kf}, we have
	\begin{align*}
		\hspace{1.5cm}K\!f(G_n)&=2n\bigg(\sum^{n-1}_{i=1}\frac{1}{2\alpha_i}+\frac{n}{6}\bigg)\nonumber&\\
		&=n\sum^{n-1}_{i=1}\frac{1}{\alpha_i}+\frac{n^2}{3}\nonumber&\\
		&=K\!f(C_n)+\frac{n^2}{3}\nonumber&\\
		&=\frac{n^3+4n^2-n}{12}.&
	\end{align*}
	
	$(2)$ It follows from Lemma \ref{the22} that
	\begin{align*}
		\hspace{1.5cm}\tau(G_{n})&=\frac{1}{2n}\prod^{n-1}_{i=1}(2\alpha_i)\cdot6^n\nonumber&\\
		&=2^{n-2}\cdot6^n\cdot\frac{1}{n}\prod^{n-1}_{i=1}\alpha_i\nonumber&\\
		&=2^{n-2}\cdot6^n\cdot\tau(C_n)\nonumber&\\
		&=n\cdot2^{2n-2}\cdot3^{n}.&
	\end{align*}	
	
	$(3)$ We now calculate the value of $W(G_n)$. If $n$ is odd, for each vertex $i$ in $G_n$, we have $$f(i,n)=1+4\sum^{\frac{n-1}{2}}_{k=1}{k}=\frac{n^2+1}{2}.$$
	\begin{table}[!ht] \centering \caption{{Kirchhoff index and number of spanning trees of graphs from $G_3$ to $G_{11}$.} }
		\begin{tabular}{lllllllll} \hline
			G& $K\!f(G)$& $\tau(G)$& 	G& $K\!f(G)$& $\tau(G)$& 	G& $K\!f(G)$& $\tau(G)$\\ \hline
			$G_3$ & 5.00& 1296& $G_{6}$& 29.50& 4478976& $G_{9}$ & 87.00& 11609505792\\
			$G_4$ & 10.33& 20736& $G_{7}$& 44.33& 62705664& $G_{10}$ & 115.83& 154793410560\\
			$G_5$ & 18.33& 311040& $G_{8}$& 63.33& 859963392& $G_{11}$ & 150.33& 2043273019392\\	
			\hline
		\end{tabular}
		\label{tab2111}
	\end{table}
Then	
	$$W(G_n)=\frac{2\sum^{n}_{i=1}f(i,n)}{2}=\sum^{n}_{i=1}f(i,n)=\frac{n^3+n}{2}.$$
If $n$ is even, for each vertex $i$ in $G_n$, we have $$f(i,n)=1+4\sum^{\frac{n-2}{2}}_{k=1}{k}+2\cdot\frac{n}{2}=\frac{n^2+2}{2}.$$
Then	
$$W(G_n)=\frac{2\sum^{n}_{i=1}f(i,n)}{2}=\sum^{n}_{i=1}f(i,n)=\frac{n^3+2n}{2}.$$	
Note that $K\!f(G_n)=\frac{n^3+4n^2-n}{12}$. Hence, we have	$\lim_{n\to\infty}\frac{K\!f(G_n)}{W(G_n)}=\frac{1}{6}$ as desired.
\end{proof}

Kirchhoff index and number  of spanning trees of graphs from $G_3$ to $G_{11}$ are listed in Table 1, respectively.

 \subsection{Multiplicative degree-Kirchhoff index of  $G_n$}

In this subsection, we will determine the multiplicative degree-Kirchhoff index of $G_n$ and show that the multiplicative degree-Kirchhoff index of $G_n$ is almost one-sixth of its Gutman index. Note that
$$\mathcal{L}_{{11}}=\left(
\begin{array}{ccccccc}
1& -\frac{1}{5}& 0 & 0& \dots& 0& -\frac{1}{5}\\
-\frac{1}{5}& 1& -\frac{1}{5}& 0& \dots& 0& 0\\
0& -\frac{1}{5}& 1& -\frac{1}{5} &  \dots&  0& 0\\
0& 0& -\frac{1}{5} & 1&  \dots& 0& 0 \\
\vdots& \vdots& \vdots& \vdots& \ddots& \vdots& \vdots\\
0& 0& 0& 0 & \dots&  1& -\frac{1}{5}\\
-\frac{1}{5}& 0& 0& 0 &    \dots&  -\frac{1}{5}& 1
\end{array}
\right)_{n\times{n}}$$
and
$$\mathcal{L}_{{12}}=\left(
\begin{array}{ccccccc}
-\frac{1}{5}& -\frac{1}{5}& 0 & 0& \dots& 0& -\frac{1}{5}\\
-\frac{1}{5}& -\frac{1}{5}& -\frac{1}{5}& 0& \dots& 0& 0\\
0& -\frac{1}{5}& -\frac{1}{5}& -\frac{1}{5} &  \dots&  0& 0\\
0& 0& -\frac{1}{5} & -\frac{1}{5}&  \dots& 0& 0 \\
\vdots& \vdots& \vdots& \vdots& \ddots& \vdots& \vdots\\
0& 0& 0& 0 & \dots&  -\frac{1}{5}& -\frac{1}{5}\\
-\frac{1}{5}& 0& 0& 0 &    \dots&  -\frac{1}{5}& -\frac{1}{5}
\end{array}
\right)_{n\times{n}}.$$
\begin{table}[!ht] \centering \caption{{Multiplicative degree-Kirchhoff index  of graphs from $G_3$ to $G_{15}$.} }
	\begin{tabular}{llllllll} \hline
		G& $K\!f^*(G)$& 	G& $K\!f^*(G)$& 	G& $K\!f^*(G)$& 	G& $K\!f^*(G)$\\ \hline
		$G_3$ & 125.00&  $G_{6}$& 737.50&  $G_{9}$ & 2175.00& $G_{13}$ & 5958.33 \\
		$G_4$ & 258.33&   $G_{7}$& 1108.33&  $G_{10}$ & 2895.33& $G_{14}$& 7320.83\\
		$G_5$ & 458.33&   $G_{8}$& 1583.33&  $G_{11}$ &   3758.33& $G_{15}$& 8875.00\\	
		\hline
	\end{tabular}
	\label{tab21}
\end{table}
Since $\mathcal{L}_A=\mathcal{L}_{{11}}+\mathcal{L}_{{12}}$ and $\mathcal{L}_S=\mathcal{L}_{{11}}-\mathcal{L}_{{12}}$, then we have
$$\mathcal{L}_{A}=\left(
\begin{array}{ccccccc}
\frac{4}{5}& -\frac{2}{5}& 0 & 0& \dots& 0& -\frac{2}{5}\\
-\frac{2}{5}& \frac{4}{5}& -\frac{2}{5}& 0& \dots& 0& 0\\
0& -\frac{2}{5}& \frac{4}{5}& -\frac{2}{5} &  \dots&  0& 0\\
0& 0& -\frac{2}{5} & \frac{4}{5}&  \dots& 0& 0 \\
\vdots& \vdots& \vdots& \vdots& \ddots& \vdots& \vdots\\
0& 0& 0& 0 & \dots&  \frac{4}{5}& -\frac{2}{5}\\
-\frac{2}{5}& 0& 0& 0 &    \dots&  -\frac{2}{5}& \frac{4}{5}
\end{array}
\right)_{n\times{n}}$$
and
$$\mathcal{L}_{S}=\left(
\begin{array}{ccccccc}
\frac{6}{5}& 0& 0 & 0& \dots& 0& 0\\
0& \frac{6}{5}& 0& 0& \dots& 0& 0\\
0& 0& \frac{6}{5}& 0 &  \dots&  0& 0\\
0& 0& 0& \frac{6}{5}&  \dots& 0& 0 \\
\vdots& \vdots& \vdots& \vdots& \ddots& \vdots& \vdots\\
0& 0& 0& 0 & \dots&  \frac{6}{5}& 0\\
0& 0& 0& 0 &    \dots&  0& \frac{6}{5}
\end{array}
\right)_{n\times{n}}.$$
Note that $\mathcal{L}_A=\frac{2}{5}L(C_n)$ and $\mathcal{L}_S$ is a diagonal matrix. By Lemma \ref{lem21}, $\frac{2}{5}\alpha_1,\frac{2}{5}\alpha_2,\ldots,\frac{2}{5}\alpha_n,\frac{6}{5},\frac{6}{5},\ldots,\frac{6}{5}$ are all the eigenvalues of $\mathcal{L}{(G_n)}$, i.e. $0,\frac{2}{5}\alpha_1,\frac{2}{5}\alpha_2,\ldots,\frac{2}{5}\alpha_{n-1},\frac{6}{5},\frac{6}{5},\ldots,\frac{6}{5}$ are all the eigenvalues of $\mathcal{L}{(G_n)}$. Then we can get the following theorem.
\begin{theorem}
	For $n\geq3$, let $G_n=P_2\boxtimes C_n$. Then
	
	$(1)$ $K\!f^{*}(G_n)=\frac{25n^3+100n^2-25n}{12}.$
	
	$(2)$ $\lim_{n\to\infty}\frac{K\!f^{*}(G_n)}{Gut(G_n)}=\frac{1}{6}.$
\end{theorem}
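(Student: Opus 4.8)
The plan is to mirror exactly the structure of the proof of Theorem~\ref{thmm1}. For part $(1)$, I would start from Lemma~\ref{kff}, using that $G_n$ has $2n$ vertices and $5n$ edges, so $K\!f^{*}(G_n)=10n\sum_{i=2}^{2n}\frac{1}{\lambda_i}$. From the spectral decomposition already established (the nonzero eigenvalues of $\mathcal{L}(G_n)$ are $\frac{2}{5}\alpha_1,\dots,\frac{2}{5}\alpha_{n-1}$ together with $n$ copies of $\frac{6}{5}$), this becomes
\begin{align*}
K\!f^{*}(G_n)&=10n\left(\sum_{i=1}^{n-1}\frac{5}{2\alpha_i}+\frac{5n}{6}\right)
=25n\sum_{i=1}^{n-1}\frac{1}{\alpha_i}+\frac{25n^2}{3}.
\end{align*}
Now I would recognize $n\sum_{i=1}^{n-1}\frac{1}{\alpha_i}=K\!f(C_n)=\frac{n^3-n}{12}$ (the same identity used in Theorem~\ref{thmm1}, since $\alpha_i=4\sin^2(\pi i/n)$ are precisely the nonzero Laplacian eigenvalues of $C_n$). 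Hence $K\!f^{*}(G_n)=25\cdot\frac{n^3-n}{12}+\frac{25n^2}{3}=\frac{25n^3+100n^2-25n}{12}$, which is the claimed formula. This part is essentially bookkeeping and carries no real obstacle.

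For part $(2)$ I need $\mathrm{Gut}(G_n)=\sum_{i<j}d_id_jd_{ij}$ asymptotically. Every vertex of $G_n$ has degree $5$ except that this is literally true for all $2n$ vertices (the strong product $P_2\boxtimes C_n$ is $5$-regular), so $d_id_j=25$ for every pair, giving the exact identity $\mathrm{Gut}(G_n)=25\,W(G_n)$. Since the proof of Theorem~\ref{thmm1} already computed $W(G_n)=\frac{n^3+n}{2}$ for $n$ odd and $\frac{n^3+2n}{2}$ for $n$ even, in both cases $W(G_n)=\frac{n^3}{2}+O(n)$, so $\mathrm{Gut}(G_n)=\frac{25n^3}{2}+O(n)$. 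Combining with $K\!f^{*}(G_n)=\frac{25n^3}{12}+O(n^2)$ yields
\[
\lim_{n\to\infty}\frac{K\!f^{*}(G_n)}{\mathrm{Gut}(G_n)}=\lim_{n\to\infty}\frac{\frac{25n^3}{12}+O(n^2)}{\frac{25n^3}{2}+O(n)}=\frac{1}{6},
\]
as desired.

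The only mild subtlety worth double-checking is the regularity claim: in $P_2\boxtimes C_n$ a vertex $(u,v)$ is adjacent to $(u',v')$ whenever $u=u'$ or $u\sim u'$ in $P_2$ and $v=v'$ or $v\sim v'$ in $C_n$, excluding the vertex itself; since $P_2$ contributes the one "equal" option plus one neighbor and $C_n$ contributes "equal" plus two neighbors, each vertex has $2\cdot 3-1=5$ neighbors, confirming $5$-regularity and hence $\mathrm{Gut}(G_n)=25W(G_n)$ exactly. With that observation in hand, part $(2)$ is immediate from the already-proved value of $W(G_n)$, so I do not anticipate any genuine obstacle in this theorem.
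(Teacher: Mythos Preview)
Your proposal is correct and follows essentially the same approach as the paper: for part~(1) you apply Lemma~\ref{kff} with the known normalized Laplacian spectrum and reduce to $25\,K\!f(C_n)+\frac{25n^2}{3}$, and for part~(2) you use the $5$-regularity of $G_n$ to get $\mathrm{Gut}(G_n)=25\,W(G_n)$ and then invoke the values of $W(G_n)$ from Theorem~\ref{thmm1}. The paper's proof is slightly terser (it simply cites regularity and Theorem~\ref{thmm1} without the explicit asymptotic bookkeeping or the degree-count verification), but the argument is the same.
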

\begin{proof}
	$(1)$ 	Since $|E(G_n)|=5n$ and $K\!f(C_n)=\frac{n^3-n}{12}$, by Lemma \ref{kff}, we have
	\begin{align*}
	\hspace{1.5cm}K\!f^*(G_n)&=10n\bigg(\sum^{n-1}_{i=1}\frac{5}{2\alpha_i}+\frac{5n}{6}\bigg)\nonumber&\\
	&=25n\sum^{n-1}_{i=1}\frac{1}{\alpha_i}+\frac{25n^2}{3}\nonumber&\\
	&=25K\!f(C_n)+\frac{25n^2}{3}\nonumber&\\
	&=\frac{25n^3+100n^2-25n}{12}.&
	\end{align*}
	
	$(2)$ It is easy to calculate that $\textrm{Gut}(G_n)=25W(G_n)$ since  $G_n$ is regular. Note that $K\!f^{*}(G_n)=\frac{25n^3+100n^2-25n}{12}$. Together with Theorem \ref{thmm1}, we have $\lim_{n\to\infty}\frac{K\!f^{*}(G_n)}{\textrm{Gut}(G_n)}=\frac{1}{6}$ as desired.
\end{proof}


Multiplicative degree-Kirchhoff index of graphs from $G_3$ to $G_{15}$ are listed in Table 2.

\section{Kirchhoff index and number of spanning trees of $G^r_n$}
In this section, we will determine the Kirchhoff index and number of spanning trees for any graph $G^r_n\in \mathcal{G}^r_{n}$. Moreover, we will show that the Kirchhoff index of $G^r_n$ is nearly one-sixth of its Wiener index. 

Similar to Lemma \ref{lem21}, we can obtain that the  spectrum of $L(G^r_n)$ consists of the eigenvalues of both $L_A(G^r_n)$ and $L_S(G^r_n)$. Let $d_i$ be the degree of vertex $i$ in $G^r_n$. Then $d_i=4$ or $d_i=5$ in $G^r_n$. It is routine to check that
$${L}_{{11}}(G^r_n)=\left(
\begin{array}{ccccccc}
d_1& -1& 0 & 0& \dots& 0& -1\\
-1& d_2& -1 & 0& \dots& 0& 0\\
0& -1 & d_3& -1 &  \dots&  0& 0\\
0& 0& -1 & d_4&  \dots& 0& 0 \\
\vdots& \vdots& \vdots& \vdots& \ddots& \vdots& \vdots\\
0& 0& 0& 0 & \dots&  d_{n-1}& -1\\
-1& 0& 0& 0 &    \dots&  -1& d_n
\end{array}
\right)_{n\times{n}}$$
and
$${L}_{{12}}(G^r_n)=\left(
\begin{array}{ccccccc}
t_1& -1& 0 & 0& \dots& 0& -1\\
-1& t_2& -1 & 0& \dots& 0& 0\\
0& -1 & t_3& -1 &  \dots&  0& 0\\
0& 0& -1 & t_4&  \dots& 0& 0 \\
\vdots& \vdots& \vdots& \vdots& \ddots& \vdots& \vdots\\
0& 0& 0& 0 & \dots&  t_{n-1}& -1\\
-1& 0& 0& 0 &    \dots&  -1& t_n
\end{array}
\right)_{n\times{n}}.$$
where $t_{i}=0$ if $d_{i}=4$ and $t_{i}=-1$ if $d_{i}=5$. Then for any graph $G^r_n\in \mathcal{G}^r_{n}$, $d_{i}+t_i=4$ holds for all $1\leqslant i\leqslant n$. Since $L_A(G^r_n)=L_{11}(G^r_n)+L_{12}(G^r_n)$ and $L_S(G^r_n)=L_{11}(G^r_n)-L_{12}(G^r_n)$, then 
$${L}_{{A}}(G^r_n)=\left(
\begin{array}{ccccccc}
4& -2& 0 & 0& \dots& 0& -2\\
-2& 4& -2 & 0& \dots& 0& 0\\
0& -2 & 4& -2 &  \dots&  0& 0\\
0& 0& -2 & 4&  \dots& 0& 0 \\
\vdots& \vdots& \vdots& \vdots& \ddots& \vdots& \vdots\\
0& 0& 0& 0 & \dots&  4& -2\\
-2& 0& 0& 0 &    \dots&  -2& 4
\end{array}
\right)_{n\times{n}}$$
and
$${L}_{{S}}(G^r_n)=\left(
\begin{array}{ccccccc}
s_1& 0& 0 & 0& \dots& 0& 0\\
0& s_2& 0 & 0& \dots& 0& 0\\
0& 0 & s_3& 0 &  \dots&  0& 0\\
0& 0& 0 & s_4&  \dots& 0& 0 \\
\vdots& \vdots& \vdots& \vdots& \ddots& \vdots& \vdots\\
0& 0& 0& 0 & \dots& s_{n-1}& 0\\
0& 0& 0& 0 &    \dots&  0& s_n
\end{array}
\right)_{n\times{n}}.$$
where $s_{i}=4$ if $d_{i}=4$ and $s_{i}=6$ if $d_{i}=5$.

Note that ${L}_A(G^r_n)=2L(C_n)$ and ${L}_S$ is a diagonal matrix. Then,  $2\alpha_1,2\alpha_2,\ldots,2\alpha_n,s_1,s_2,\ldots,s_n$ are all the eigenvalues of ${L}{(G^r_n)}$, i.e. $0,2\alpha_1,2\alpha_2,\ldots,2\alpha_{n-1},s_1,s_2,\ldots,s_n$ are all the eigenvalues of ${L}{(G^r_n)}$. Next, we can get the following theorem.

\begin{theorem}\label{thm1}
	For any graph $G^r_{n}\in\mathcal{G}^r_{n}$ with $n\geqslant3$, we have
	
	$(1)$ $K\!f(G^r_n)=\frac{n^3+4n^2+(2r-1)n}{12}.$
	
	$(2)$ $\tau(G^r_n)=n\cdot2^{2n+r-2}\cdot3^{n-r}.$
	
	$(3)$ $\lim_{n\to\infty}\frac{K\!f(G^r_n)}{W(G^r_n)}=\frac{1}{6}.$
\end{theorem}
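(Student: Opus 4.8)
The plan is to mimic exactly the three‑part argument used for $G_n$ in Theorem \ref{thmm1}, now fed with the modified spectrum $0,2\alpha_1,\dots,2\alpha_{n-1},s_1,\dots,s_n$ of $L(G^r_n)$. The key observation is that the $s_i$'s take the value $6$ on the $n-r$ vertices incident to a surviving vertical edge and the value $4$ on the $2r$ endpoints of the $r$ deleted edges — but since each deleted edge contributes $s_i=4$ to \emph{both} of its two endpoints, the multiset $\{s_1,\dots,s_n\}$ actually contains $n-r$ copies of $6$ and only $r$ copies of $4$ (one block of the symmetric decomposition has $n$ diagonal entries, indexed by $V_1$). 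I would state this reduction first: all that changes relative to $G_n$ is that $r$ of the former eigenvalues $6$ are replaced by $4$.

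For part (1), I would apply Lemma \ref{kf} with $|V(G^r_n)|=2n$:
\begin{align*}
K\!f(G^r_n)&=2n\left(\sum_{i=1}^{n-1}\frac{1}{2\alpha_i}+\frac{n-r}{6}+\frac{r}{4}\right)\\
&=n\sum_{i=1}^{n-1}\frac{1}{\alpha_i}+\frac{n(n-r)}{3}+\frac{nr}{2}\\
&=K\!f(C_n)+\frac{n(2n+r)}{6}\\
&=\frac{n^3-n}{12}+\frac{2n^2+rn}{6}=\frac{n^3+4n^2+(2r-1)n}{12},
\end{align*}
using $K\!f(C_n)=\frac{n^3-n}{12}$ from \cite{024}. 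For part (2), Lemma \ref{the22} gives
\begin{align*}
\tau(G^r_n)&=\frac{1}{2n}\prod_{i=1}^{n-1}(2\alpha_i)\cdot 6^{\,n-r}\cdot 4^{\,r}\\
&=2^{n-2}\cdot 6^{\,n-r}\cdot 4^{\,r}\cdot\frac{1}{n}\prod_{i=1}^{n-1}\alpha_i\\
&=2^{n-2}\cdot 6^{\,n-r}\cdot 4^{\,r}\cdot\tau(C_n)=n\cdot 2^{2n+r-2}\cdot 3^{\,n-r},
\end{align*}
since $\tau(C_n)=n$ and $2^{n-2}\cdot 4^{r}=2^{n+2r-2}$, $6^{n-r}=2^{n-r}3^{n-r}$, so the power of $2$ is $(n+2r-2)+(n-r)=2n+r-2$.

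For part (3) I would argue that deleting vertical edges does not change the pairwise distances in the graph by much, so $W(G^r_n)$ has the same leading term $\tfrac{n^3}{2}$ as $W(G_n)$; more carefully, I would note $G^r_n\subseteq G_n$ so $W(G^r_n)\geqslant W(G_n)=\tfrac{n^3}{2}+O(n)$, while each distance in $G^r_n$ is at most that in the spanning subgraph consisting of the two "outer" $n$-cycles plus all horizontal rungs surviving, which still has diameter $O(n)$, giving $W(G^r_n)=\tfrac{n^3}{2}+O(n^2)$ — or, most cleanly, just cite the explicit formula for $W(G^r_n)$ to be proved alongside (if the paper includes one) and divide. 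Since $K\!f(G^r_n)=\tfrac{n^3}{12}+O(n^2)$ from part (1), the ratio tends to $\tfrac{1}{6}$. The main obstacle is part (3): unlike $G_n$, the graph $G^r_n$ is not vertex‑transitive and its Wiener index genuinely depends on \emph{which} $r$ vertical edges are removed, so one must either establish a clean closed form (or tight asymptotic bounds) for $W(G^r_n)$ uniformly over $\mathcal{G}^r_n$, or argue that the $n^2$‑order term is irrelevant to the limit; I would isolate this as a separate lemma giving $W(G^r_n)=\tfrac{n^3}{2}+O(n^2)$ with an explicit constant, proved by the same vertex‑by‑vertex summation $f(i,n)$ used for $G_n$, adjusted by at most $r\leqslant n$ per vertex for the missing rungs.
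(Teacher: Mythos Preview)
Your proofs of parts (1) and (2) are line-for-line the paper's own argument: compute $K\!f$ and $\tau$ from the spectrum $0,2\alpha_1,\dots,2\alpha_{n-1},s_1,\dots,s_n$ via Lemmas \ref{kf} and \ref{the22}, reduce to $K\!f(C_n)$ and $\tau(C_n)$, and simplify. Nothing to add there.

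For part (3), however, you are working much harder than necessary, and one of your worries is actually unfounded. You write that ``the Wiener index genuinely depends on \emph{which} $r$ vertical edges are removed''; in fact it does not. The paper dispatches (3) in one line with the exact identity
\[
W(G^r_n)=W(G_n)+r,
\]
valid for every $G^r_n\in\mathcal{G}^r_n$. The reason is that deleting the vertical edge $ii'$ increases \emph{exactly one} pairwise distance, namely $d(i,i')$, from $1$ to $2$: the path $i\to (i\pm1)'\to i'$ of length $2$ uses only a diagonal edge and a bottom-cycle edge, so it survives no matter which vertical edges are deleted; and every other shortest path in $G_n$ can already be realised without vertical edges (recall $d_{G_n}((a,u),(b,v))=\max\{d_{P_2}(a,b),d_{C_n}(u,v)\}$, so when $u\neq v$ a single diagonal step plus cycle edges suffices). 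Hence each of the $r$ deletions contributes exactly $+1$ to $W$, independently of location. With this identity, the limit is immediate from part (1) and the formula for $W(G_n)$.

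Your asymptotic route (sandwiching $W(G^r_n)$ between $W(G_n)$ and $W(G_n)+O(n^2)$) would also prove the limit, so your proposal is not wrong; it is just less sharp than what the structure of $P_2\boxtimes C_n$ actually gives.
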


\begin{proof}
		For any graph $G^r_{n}\in\mathcal{G}^r_{n}$ with $n\geq3$, without loss of generality, assume that $s_1=s_2=\cdots=s_r=4$ and $s_{r+1}=s_{r+2}=\cdots=s_n=6$.

	$(1)$ Note that $|V(G^r_n)|=2n$ and $K\!f(C_n)=\frac{n^3-n}{12}$. By Lemma \ref{kf}, we have
	\begin{align*}
	\hspace{1.5cm}K\!f(G^r_n)&=2n\bigg(\sum^{n-1}_{i=1}\frac{1}{2\alpha_i}+\frac{n-r}{6}+\frac{r}{4}\bigg)\nonumber&\\
	&=n\sum^{n-1}_{i=1}\frac{1}{\alpha_i}+\frac{n(n-r)}{3}+\frac{nr}{2}\nonumber&\\
	&=K\!f(C_n)+\frac{2n^2+nr}{6}\nonumber&\\
	&=\frac{n^3+4n^2+(2r-1)n}{12}.&
	\end{align*}
	
	$(2)$ By Lemma \ref{the22}, we obtain
	\begin{align*}
	\hspace{1.5cm}\tau(G^r_{n})&=\frac{1}{2n}\prod^{n-1}_{i=1}(2\alpha_i)\cdot4^r\cdot6^{n-r}\nonumber&\\
	&=2^{2n+r-2}\cdot3^{n-r}\cdot\frac{1}{n}\prod^{n-1}_{i=1}\alpha_i\nonumber&\\
	&=2^{2n+r-2}\cdot3^{n-r}\cdot\tau(C_n)\nonumber&\\
	&=n\cdot2^{2n+r-2}\cdot3^{n-r}.&
	\end{align*}	
	
	$(3)$ It is easy to calculate that $W(G^r_n)=W(G_n)+r$. Since $K\!f(G^r_n)=\frac{n^3+4n^2+(2r-1)n}{12}$,  thus we have	$\lim_{n\to\infty}\frac{K\!f(G^r_n)}{W(G^r_n)}=\frac{1}{6}$.
\end{proof}
 
 \begin{remark}
 	If $r=0$, then $\mathcal{G}^0_n=\{G_n\}$. One can see that Theorem \ref{thmm1} is a corollary of Theorem \ref{thm1}.
 \end{remark}

\section{Concluding remarks}
In this paper, we first establish the explicit expressions for the Kirchhoff index, multiplicative degree-Kirchhoff index and number of spanning trees of $G_n$, where $G_n=P_2\boxtimes C_n$ and $n\geqslant3$. We find that the Kirchhoff (resp. multiplicative degree-Kirchhoff)  index is almost one-sixth of its Wiener (resp. Gutman) index. Later, we construct a family of graphs obtained from $G_n$ by deleting any $r$ vertical edges of $G_n$, and  show that the Kirchhoff indices of these graphs are almost one-sixth of their Gutman indices. It would be interesting to determine their multiplicative degree-Kirchhoff indices and Gutman indices. We will do it in the near future.

Motivated by the construction $P_2\boxtimes P_n$ in \cite{pan1} and $P_2\boxtimes C_n$ in this paper, we propose the following question for further study:
\begin{question}
For a simple connected graph $G$, how can we determine the Laplacian spectrum and normalized Laplacian specteum of the graph $P_2\boxtimes{G}$?
\end{question}

\section*{Acknowledgements} The authors would like to express their sincere gratitude to all the referees for their careful reading and insightful suggestions.





\end{document}